\documentclass[a4paper,11pt]{amsart}
\usepackage[T2A]{fontenc}
\usepackage[cp1251]{inputenc}
\usepackage[dvips]{graphicx}
\usepackage{amsmath}
\usepackage{amsfonts}
\usepackage{amssymb}
\usepackage{amsthm}
\usepackage{euscript}
\usepackage{xypic}
\author[Ievgen Makedonskyi]{Ievgen Makedonskyi}
\address{
Algebra Department, Faculty of Mechanics and Mathematics,
Kyiv Taras Shevchenko University\\
64, Volodymyrskaia street, 01033 Kyiv, Ukraine}
\email{makedonskyi@univ.kiev.ua}
\title[On wild Lie algebras]{On wild Lie algebras}

\DeclareMathOperator\id{id}

\DeclareMathOperator\differential{d}
\renewcommand\d\differential

\let\leq\leqslant
\let\geq\geqslant

\let\star *
\let\subset\subseteq

\newtheorem{theorem}{Theorem}
\newtheorem{lemma}{Lemma}
\newtheorem{proposition}{Proposition}
\newtheorem{corollary}{Corollary}
\newtheorem{definition}{Definition}
\theoremstyle{definition}

\newtheorem{remark}{Remark}

\begin{document}

\sloppy

\begin{abstract}
 We give a criterion of tameness and wildness for a finite-dimensional Lie algebra over an algebraically closed field.
\end{abstract}

\maketitle
\section{Introduction}
All considered Lie algebras are defined over a fixed
algebraically closed field $\mathbb{K}$ of zero characteristic
and are finite-dimensional over $\mathbb{K}$.
Throughout in the text we will not write the field.
In particular, by $sl_n$ we will denote $sl_n(\mathbb{K})$.
All definitions of tameness, wildness and controlled wildness correspond to such definitions from the work \cite{Rin}.

There exist many papers about the tameness and wildness (see for example
\cite{Gel}, \cite{Drozd}, \cite{Rin}).

Let $L$ be a finite-dimensional Lie algebra. We will consider the classification problem of
finite dimensional linear representations of $L$ up to equivalence.
This work is devoted to the following question: determine all Lie algebras $L$ such that
the problem of classification of all representations of $L$ is wild or tame.
 Such algebras we will call wild and tame respectively.
 The main result of this paper is:

  Theorem 2.

There exist only five classes of tame Lie algebras:

1) semisimple Lie algebras;

2) the one-dimensional Lie algebra;

3) the direct sums of semisimple and the one-dimensional Lie algebras;

4) $sl_2 \rightthreetimes I$, where $I$ is the two-dimensional indecomposable module;

5) the direct sums of semisimple Lie algebras and $sl_2 \rightthreetimes I$.

Any other Lie algebra is controlled wild.

So we have proved that the classification problems of the representations of the Lie algebras are either tame
or controlled wild.

\section{Examples of tame and wild Lie algebras}

\subsection{Two-dimensional Lie algebras are wild}
 Let $L$ be a two-dimensional Lie algebra. Then $L$ is either abelian or has a basis $L= \langle x,y\rangle$
such that $\left[ x,y\right]=y$. The classification problem for the representations of the two-dimensional abelian Lie algebra
coincides with the classification problem for two commuting matrices. This problem is controlled wild.
The classification problem for two matrices satisfying the relation
 $\left[ x,y\right]=y$ is controlled wild too (see for example \cite{Sam}). Thus,
any two-dimensional Lie algebra is controlled wild.

\subsection{Solvable Lie algebras are wild}
 Assume that $L \rhd I$, $\dim L / I=2$. Then $L$ is controlled wild because $L / I$
 is two-dimensional.

 In particular, let $L$ be a solvable Lie algebra.
 Assume that $\dim{L} \geq 2$. Then $L$ has an ideal of codimension $2$.
  Therefore, we have:
  \begin{proposition}\label{solv}
  Any solvable Lie algebra $L$ with $\dim L>1$ is controlled wild.
  \end{proposition}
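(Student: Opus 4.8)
The plan is to reduce the statement to the codimension-two situation already settled at the start of this subsection. Recall the observation made there: if $L \rhd I$ with $\dim L/I = 2$, then every representation of the two-dimensional quotient $L/I$ pulls back along the projection $L \to L/I$ to a representation of $L$, and since two-dimensional Lie algebras are controlled wild this forces $L$ to be controlled wild as well. Consequently it will be enough to exhibit, for an arbitrary solvable $L$ with $\dim L \geq 2$, an ideal of $L$ of codimension two.

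To produce such an ideal I would appeal to Lie's theorem, which applies because $\mathbb{K}$ is algebraically closed of characteristic zero. Applying it to the adjoint action of the solvable algebra $L$ on the underlying space $L$ yields a complete flag
\[
0 = L_0 \subset L_1 \subset \dots \subset L_n = L, \qquad \dim L_i = i,
\]
in which every $L_i$ is stable under $\operatorname{ad}(L)$. Since a subspace invariant under $\operatorname{ad}(L)$ is exactly an ideal of $L$, each $L_i$ is an ideal. As $n = \dim L \geq 2$, the term $I := L_{n-2}$ is then an ideal of codimension two, and the reduction of the previous paragraph completes the argument.

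I do not expect a serious obstacle: the whole content is that solvability forces ideals of every intermediate dimension. The two points that deserve a line of care are, first, that Lie's theorem really delivers a full flag of $\operatorname{ad}(L)$-invariant subspaces --- one starts from the single common eigenvector the theorem guarantees and refines it by induction on $\dim L$, passing at each step to the induced solvable action on the quotient --- and, second, that lifting representations along $L \to L/I$ transports controlled wildness from $L/I$ to $L$, which is precisely what the preamble to the proposition records.
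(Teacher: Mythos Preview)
Your argument is correct and follows exactly the route the paper takes: reduce to the codimension-two quotient, then observe that a solvable $L$ of dimension $\geq 2$ has an ideal of codimension two. The paper simply asserts this last fact without justification, whereas you supply it via Lie's theorem applied to the adjoint action; so your write-up is, if anything, more complete than the paper's.
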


  The following fact is obvious.
  \begin{remark} \label{summa}
  Assume $L=I \oplus J$ is a direct sum of Lie algebras, where $I$ is a wild Lie algebra. Then $L$ is a wild Lie algebra too.
  \end{remark}

\subsection{Semisimple Lie algebras are tame} On the other hand, the classical representation theory of semisimple Lie algebras
implies
 that all semisimple Lie algebras are tame.

\subsection{One-dimensional extensions of semisimple Lie algebras are tame}
The next proposition seems to be known, but having no precise reference we supply it with
a complete proof.
 \begin{proposition} \label{tame} \label{sumtame}
Let $L=\widehat{L}\oplus L_1$ be the Lie algebra such that $\widehat{L}$
is semisimple. Let $(M,f)$ be an irreducible representation of $L$.
 Then there exist indecomposable representations $(M_1,f_1)$ of the algebra $\widehat{L}$ and $(M_2,f_2)$ of the algebra
 $L_1$ such that $M=M_1\otimes M_2$,
 $f(X+Y)=f_1(X)\otimes \id + \id \otimes f_2(Y)$, $X \in \widehat{L}$,
 $Y \in L_1$ and $\id$ is the identity operator.
 \end{proposition}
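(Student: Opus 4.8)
The plan is to exploit the two defining features of the situation: that $\widehat{L}$ is semisimple, so every representation of it is completely reducible by Weyl's theorem, and that the sum $L=\widehat{L}\oplus L_1$ is a \emph{direct sum of Lie algebras}, so $[\widehat{L},L_1]=0$ and consequently the operators $f(X)$ with $X\in\widehat{L}$ commute with the operators $f(Y)$ with $Y\in L_1$. Writing $f_1$ for the restriction of $f$ to $\widehat{L}$, I would first decompose $M$, as a $\widehat{L}$-module, into isotypic components $M=\bigoplus_\lambda M_\lambda$, where $M_\lambda\cong V_\lambda\otimes W_\lambda$ with $V_\lambda$ the irreducible $\widehat{L}$-module of type $\lambda$ and $W_\lambda=\operatorname{Hom}_{\widehat{L}}(V_\lambda,M)$ the multiplicity space. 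By Schur's lemma the commutant of $f(\widehat{L})$ inside $\operatorname{End}(M)$ is exactly $\bigoplus_\lambda \id_{V_\lambda}\otimes\operatorname{End}(W_\lambda)$.

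Next I would use that $f(L_1)$ lies in this commutant. Each $f(Y)$, $Y\in L_1$, commutes with $f(\widehat{L})$ and therefore preserves every isotypic component $M_\lambda$; moreover on $M_\lambda$ it has the form $\id_{V_\lambda}\otimes A_\lambda(Y)$ for some $A_\lambda(Y)\in\operatorname{End}(W_\lambda)$. Consequently each $M_\lambda$ is invariant under both $f(\widehat{L})$ and $f(L_1)$, hence is an $L$-submodule of $M$. Since $M$ is irreducible, exactly one isotypic component is nonzero, say $M=M_\lambda=V_\lambda\otimes W$ with $W=W_\lambda$. Setting $M_1=V_\lambda$ (with the given $\widehat{L}$-action $f_1$) and $M_2=W$, the restriction of $f$ to $\widehat{L}$ is $f_1\otimes\id$, and the restriction to $L_1$ is $\id\otimes f_2$, where $f_2(Y):=A_\lambda(Y)$.

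It remains to check that $f_2$ is a genuine representation of $L_1$ and that both factors are indecomposable. The first is a short computation: for $Y,Y'\in L_1$ one has $[Y,Y']\in L_1$, and comparing $f([Y,Y'])=\id\otimes f_2([Y,Y'])$ with $[f(Y),f(Y')]=\id\otimes[f_2(Y),f_2(Y')]$ gives $f_2([Y,Y'])=[f_2(Y),f_2(Y')]$. The module $M_1=V_\lambda$ is irreducible, hence indecomposable. For $M_2$, if $W=W'\oplus W''$ were a nontrivial decomposition of $L_1$-modules, then $V_\lambda\otimes W'$ and $V_\lambda\otimes W''$ would both be invariant under $f(\widehat{L})$ and under $f(L_1)$, splitting $M$ as an $L$-module and contradicting its irreducibility; so $M_2$ is indecomposable. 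Assembling these pieces yields $f(X+Y)=f_1(X)\otimes\id+\id\otimes f_2(Y)$, as required.

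The main obstacle, and the only place the hypotheses are essential, is the reduction to a single isotypic component: this needs both complete reducibility of $M|_{\widehat{L}}$ (to obtain the isotypic/Schur description and the commutant $\bigoplus_\lambda\id\otimes\operatorname{End}(W_\lambda)$) and the commuting of the two images (to make each $M_\lambda$ an $L$-submodule). Everything else is formal. The one point I would state carefully is the canonical identification $M_\lambda\cong V_\lambda\otimes\operatorname{Hom}_{\widehat{L}}(V_\lambda,M)$ under which the commutant acts only on the second factor, since it is precisely this identification that produces the tensor-product form of $f$.
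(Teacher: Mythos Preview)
Your proof is correct and follows essentially the same approach as the paper's: both use Weyl's theorem to decompose $M$ under $\widehat{L}$, invoke Schur's lemma to see that the commuting operators $f(Y)$ preserve each isotypic block and act as $\id\otimes(\text{something})$ there, deduce from indecomposability/irreducibility that only one isotypic type occurs, and then read off the tensor factorization. The only difference is cosmetic: you phrase things via isotypic components and multiplicity spaces $W_\lambda=\operatorname{Hom}_{\widehat{L}}(V_\lambda,M)$, whereas the paper writes out the same argument with explicit block matrices.
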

 \begin{proof}
 The semisimplicity of $\widehat{L}$ implies that the representation
$\widehat{L} \ni X \mapsto f(X)$ is completely decomposable.
Therefore we can assume that
$f(X)=\left(
        \begin{array}{cccc}
          F_1(X) & 0 & \ldots & 0 \\
          0 &  F_2(X) & \ldots & 0 \\
          \vdots & \vdots & \ddots & \vdots \\
          0 & 0 & \ldots &  F_k(X) \\
        \end{array}
      \right),
$
where $F_i$ is the irreducible representation of the algebra $\widehat{L}$, the dimension of this representation will be denoted by $h_i$.
Next, we may assume, that the representations $F_1,...F_p$ are pairwise equivalent,
 and the representations $F_q, q>p$ are not equivalent to $F_1$.
 Let $S_{ij}$ be $h_i \times h_j$-matrices and $S=\left(
        \begin{array}{ccc}
          S_{11} &  \ldots & S_{1k} \\
          \vdots  & \ddots & \vdots \\
          S_{k1}  & \ldots &  S_{kk} \\
        \end{array}
      \right)\in gl(n, \mathbb{K})$.
 Assume that $f(X)S=Sf(X)$ for any $X \in \widehat{L}$. Then $f_i(X)S_{ij}=S_{ij}f_j(X)$,
 $i,j=1, \ldots k$.
 By the Shur's Lemma (\cite{Goto}, p. 225) we have $S_{ij}=s_{ij}1_{h_1}$, $s_{ij}\in \mathbb{K}$, $i,j=1, \ldots p$ and
 $S_{ij}=0$, $i \leq p <j$ У $j \leq p <i$.

 Let us apply this result to $S=f(Y)$, $Y \in L_1$. Assume that $p \neq k$.
 Then the relations $S_{ij}=0$, $i \leq p <j$ and $j \leq p <i$ imply that $(M,f)$ is decomposable,
a contradiction. Therefore, $p=k$.

Denote $f_1(X)=F_1(X)$ for $X \in \widehat{L}$, $f_2(Y)=(s_{ij})\in gl(p,\mathbb{K})$,
 $Y \in L_1$. It is obvious that $M$ is decomposable into a tensor product. Thus, we have:
 \[f(X)=f_1(X)\otimes 1_p, X \in \widehat{L};\]
 \[f(Y)=1_h \otimes f_2(Y), Y \in L_1.\]
 It is easy to see that if the representation $(M_2,f_2)$ is decomposable into the direct sum of two other representations, then the representation
$f$ is decomposable into the direct sum of two other representations, too. Therefore, $(M_2,f_2)$ is an indecomposable representation.
Conversely, if the representation $(M_2,f_2)$ is indecomposable, then the representation  $(M,f)$ is indecomposable, too.
 \end{proof}
 \begin{remark}\label{rtame}
 The direct sum of a tame and semisimple Lie algebras is tame.
  The representations of the one-dimensional Lie algebra $\langle  e \rangle$ are given by the image of the element $e\mapsto f_2(e)$.
 Therefore for a semisimple Lie algebra $\widehat{L}$ any indecomposable representation
 of the Lie algebra $L=\widehat{L}\oplus \langle e \rangle$,
 is given by the simple representation of the algebra $\widehat{L}$ and the Jordan cell. Therefore, all these algebras are tame.

 More generally, the direct sums of semisimple and tame Lie algebras are tame.
 \end{remark}

 \section{Quiver of Lie algebra with an abelian radical}
 Now we are going to reduce the study of representations of Lie algebras with abelian radical to the study of representations of certain quivers.

 \begin{lemma}
 Let $\widehat{L}=L \rightthreetimes I$ be a Lie algebra such that $L$ is semisimple, $I$ is an abelian ideal.
 Then the category of representations of the algebra  $\widehat{L}$ is equivalent to the following category:
 the objects of this category will be pairs $(M, \phi)$, where $M$ is an
 $L$-module, $\phi: I \otimes M \rightarrow M$ is the module homomorphism such that
 \begin{equation} \label{ex}
 \phi  \circ (\id \otimes \phi)
 \left(\left(I \wedge I \right) \otimes M\right)=0.
 \end{equation}
 The morphisms in this category are the commutative diagrams:

\begin{equation}\label{com}
 \xymatrix{
I\otimes M\ar[d]^{\id \otimes \alpha} \ar[r]_\phi &M\ar[d]_{ \alpha}\\
I\otimes N\ar[r]^\psi&N},
\end{equation}
 $\alpha$ is the module homomorphism.

 \end{lemma}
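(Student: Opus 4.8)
The plan is to exhibit an explicit equivalence by constructing mutually inverse functors between the two categories, with the verification amounting to a translation of the Lie bracket relations of the semidirect product $\widehat{L}=L\rightthreetimes I$ into the module-theoretic data of a pair $(M,\phi)$. The guiding observation is that a representation $\rho\colon\widehat{L}\to\mathfrak{gl}(M)$ is determined by its restrictions to $L$ and to $I$ together with one compatibility condition, and each of these three ingredients corresponds to one of the three types of bracket in $\widehat{L}=L\oplus I$.

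First I would set up the forward functor. Given a representation $\rho$, restricting to $L$ makes $M$ an $L$-module, and I define $\phi\colon I\otimes M\to M$ by $\phi(a\otimes m)=\rho(a)m$ for $a\in I$, $m\in M$. The relations $[X,Y]$ internal to $L$ say exactly that $M$ is an $L$-module. For $X\in L$, $a\in I$ the relation $\rho([X,a])=[\rho(X),\rho(a)]$, after writing $[X,a]=X\cdot a$ and using the tensor-product action $X\cdot(a\otimes m)=(X\cdot a)\otimes m+a\otimes(X\cdot m)$ on $I\otimes M$, is equivalent to $\phi(X\cdot(a\otimes m))=X\cdot\phi(a\otimes m)$, i.e.\ to $\phi$ being a homomorphism of $L$-modules. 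Finally, since $I$ is abelian, $\rho([a,b])=0$ becomes $[\rho(a),\rho(b)]=0$; here I would compute $\phi\circ(\id\otimes\phi)(a\otimes b\otimes m)=\rho(a)\rho(b)m$ and restrict to the antisymmetric tensors $a\wedge b=a\otimes b-b\otimes a$, obtaining $[\rho(a),\rho(b)]m$, so that this commutator vanishes precisely when condition \eqref{ex} holds.

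Conversely, I would define the inverse functor on a pair $(M,\phi)$ by letting $\rho$ act on $L$ through the module structure and on $I$ by $\rho(a)m=\phi(a\otimes m)$, extended linearly to $\widehat{L}=L\oplus I$. Reading the computations of the previous paragraph backwards, the $L$-module axioms, the $L$-linearity of $\phi$, and condition \eqref{ex} together force $\rho$ to respect all three types of bracket, hence to be a genuine Lie algebra homomorphism. The two assignments are visibly mutually inverse on objects. For morphisms, a homomorphism of $\widehat{L}$-representations $\alpha\colon M\to N$ is a linear map commuting with every $\rho(Z)$: commuting with $\rho(X)$ for $X\in L$ says $\alpha$ is an $L$-module map, while commuting with $\rho(a)$ for $a\in I$ says $\alpha(\phi(a\otimes m))=\psi(a\otimes\alpha(m))$, which is exactly the commutativity of diagram \eqref{com}. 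Thus the functors are fully faithful and yield the claimed equivalence.

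The step I expect to require the most care is purely bookkeeping rather than conceptual: getting the tensor-product $L$-action on $I\otimes M$ exactly right so that the $L$-linearity of $\phi$ genuinely encodes the mixed bracket $[X,a]=X\cdot a$ (and not some twisted variant), and correctly identifying the wedge factor in \eqref{ex} with the commutator of the operators $\rho(a)$ rather than their anticommutator. Once these identifications are pinned down, the equivalence follows formally.
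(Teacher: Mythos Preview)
Your proposal is correct and follows essentially the same approach as the paper's own proof: both construct the equivalence by sending a representation $\rho$ to the pair $(M,\phi)$ with $\phi(a\otimes m)=\rho(a)m$, verify via direct computation that the mixed bracket $[l,a]$ encodes the $L$-linearity of $\phi$ and the vanishing bracket $[a,b]=0$ encodes condition~\eqref{ex}, and then reverse the construction. Your presentation is a bit more structured (organizing the argument around the three types of brackets in $L\oplus I$), but the content and level of detail match the paper's proof.
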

 \begin{proof}
 Let $M$ be $\widehat{L}$-module. Therefore $M$ is $L$-module. Let the map $\phi: I \otimes M \rightarrow M$ be given by
 $i \otimes m \mapsto i  m$. Then for any $i,j \in I, m \in M$:
 \[\phi  \circ (\id \otimes \phi)((i\otimes j - j\otimes i) \otimes(m))=i(jm)-j(im)=[i,j]m=0.\]
 Therefore the condition \eqref{ex} holds.

 Also we have:
 \[l \phi (i \otimes m)=lim=[l,i]m + ilm= \phi(l ( i\otimes m))\]
 Hence $\phi$ is the module morphism.

 Conversely, let $\phi$ be a module morphism $\phi: I \otimes M \rightarrow M$ satisfying the condition \eqref{ex}.
 Define the action of $i \in I$ on $m \in M$ by the rule $im:= \phi(i \otimes m)$. By the condition \eqref{ex}
 we get for $i,j \in I, m \in M$: $[i,j]m=ijm-jim=0$, besides that, for $l \in L, i \in I, m \in M$ we have:
 \[[l,i]m=\phi([l,i] \otimes m)=\phi(l(i \otimes m)-i \otimes lm)=l\phi(i \otimes m)-ilm=lim-ilm \].
 Therefore $M$ with the given action is an $\widehat{L}$-module.

 It is obvious that the two considered maps are inverse to each other. Thus we have the correspondence between the objects.

 Now let $A$ be a morphism of the $\widehat{L}$-modules $A: M\rightarrow N$.
 Then it is also a morphism of the $L$-modules. Denote this morphism by $\alpha$.
 Then for $i \in I, m \in M$ we obtain that
 \[\alpha(\phi(i \otimes m))=\alpha(im)=A(im)=iA(m)=i\alpha(m)=\psi(i \otimes \alpha(m)).\]
 Therefore this map gives the commutative diagram \eqref{com}.
 By the same calculation we have that any commutative diagram \eqref{com} gives the morphism of the
 $\widehat{L}$-modules. Thus we proved that these two categories are equivalent.
 \end{proof}

 \begin{remark}\label{com1}
 In the same way we have that if $\widehat{L}=L \rightthreetimes R$, $L$ is a semisimple Lie algebra and the radical $R$
 is generated by the submodule $I$, $I\backsimeq R/[R,R]$, then the category of representations of $\widehat{L}$ is equivalent
 to the category of pairs $(M, \phi)$, where $M$ is an
 $L$-module, $\phi: I \otimes M \rightarrow M$ is a module morphism with some set of relations,
 which hold if the relations \eqref{ex} hold (i. e. we have the inclusion of the ideals).
 \end{remark}

For a given Lie algebra $L \rightthreetimes I$, where $L$ is semisimple and  $I$ is an $L$-module,
let us introduce  the quiver $K_I$. The vertices of this quiver are
equivalence classes of irreducible $L$-modules, the number of arrows from $M$ to $N$ is equal to the multiplicity of $N$ in $I\otimes M$.

\begin{lemma}
The category of all finite dimensional representations of $K_I$ is equivalent to the category
$\mathcal{K}_I$ of pairs $(M, \phi: I \otimes M \rightarrow M)$,
with morphisms given by the commutative diagrams \ref{com}.
\end{lemma}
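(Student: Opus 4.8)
The plan is to construct an explicit equivalence of categories between representations of the quiver $K_I$ and the category $\mathcal{K}_I$ of pairs $(M,\phi)$. The key observation is that both categories are built out of the same combinatorial data, namely the decomposition of $L$-modules into isotypic components indexed by irreducible $L$-modules, which are precisely the vertices of $K_I$. So first I would recall that, since $L$ is semisimple, every finite-dimensional $L$-module $M$ decomposes as a direct sum $M=\bigoplus_S S\otimes V_S$, where $S$ runs over the (finitely many, up to isomorphism) irreducible $L$-modules appearing in $M$, and $V_S=\mathrm{Hom}_L(S,M)$ is the multiplicity space. The vector spaces $V_S$ are exactly the data assigned to the vertices of a representation of $K_I$.

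Next I would analyze the module morphism $\phi\colon I\otimes M\to M$. The main point is that, because $\phi$ is required to be a homomorphism of $L$-modules, it is determined by its components between isotypic pieces, and by Schur's Lemma each such component is controlled entirely by the multiplicity spaces. Concretely, if $M=\bigoplus_S S\otimes V_S$ and we write $I\otimes M=\bigoplus_S (I\otimes S)\otimes V_S$ and decompose each $I\otimes S=\bigoplus_T T\otimes \mathrm{Hom}_L(T, I\otimes S)$, then an $L$-equivariant map into $\bigoplus_T T\otimes V_T$ is the same as a collection of linear maps $V_S\to V_T$, one for each irreducible summand $T$ of $I\otimes S$. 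But the number of such summands $T$ (counted with multiplicity) is exactly the multiplicity of $T$ in $I\otimes S$, which by the definition of $K_I$ is the number of arrows from $S$ to $T$. Hence giving $\phi$ is the same as giving a linear map for each arrow of $K_I$, i.e. a representation of the quiver. This correspondence on objects is the heart of the argument.

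To finish, I would check that this assignment is functorial and that morphisms match up. A morphism in $\mathcal{K}_I$ is a commutative square \eqref{com} with $\alpha$ an $L$-module homomorphism $M\to N$; by Schur's Lemma again, such an $\alpha$ is exactly a collection of linear maps $V_S\to W_S$ between the multiplicity spaces of $M$ and $N$, which is precisely a morphism of quiver representations. The commutativity of \eqref{com} translates, component by component, into the intertwining relations that a quiver morphism must satisfy along each arrow. I would then verify that the two constructions (object and morphism levels) are mutually inverse and compatible with composition, giving the desired equivalence of categories.

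I expect the main obstacle to be bookkeeping rather than conceptual difficulty: one must carefully identify the multiplicity of each irreducible $T$ in $I\otimes S$ with the arrows of $K_I$ and keep track of how a single $L$-equivariant $\phi$ splits into arrow-indexed linear maps, since the same irreducible $T$ may occur with multiplicity greater than one in $I\otimes S$ and the choice of an identification of $\mathrm{Hom}_L(T,I\otimes S)$ with a coordinate space (i.e. a labeling of the parallel arrows) must be fixed. Once a basis of each such Hom-space is chosen, the correspondence is canonical, and naturality follows from the functoriality of $\mathrm{Hom}_L(-,-)$ together with Schur's Lemma.
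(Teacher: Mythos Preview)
Your proposal is correct and follows essentially the same approach as the paper: decompose $M$ into isotypic components $\bigoplus V_i\otimes M_i$, use Schur's Lemma to see that an $L$-map $\phi\colon I\otimes M\to M$ is the same data as linear maps between multiplicity spaces indexed by the irreducible summands of each $I\otimes M_i$ (i.e.\ the arrows of $K_I$), and similarly for morphisms. Your remark about fixing bases of the Hom-spaces to label parallel arrows is exactly the bookkeeping the paper handles implicitly by writing $I\otimes M_i=\bigoplus_j M_{ij}$.
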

\begin{proof}
Decompose the module $M$ into the direct sum of irreducible components:
\[M=\bigoplus_{i=1}^n V_i \otimes M_i,\]
where all $M_i$ are pairwise nonequivalent irreducible components of $M$, $V_i$ are vector spaces and $\dim V_i$ is the multiplicity of  $M_i$ in $M$.
Furthermore:
\[I \otimes M=\bigoplus_{i=1}^n V_i \otimes I \otimes M_i=\bigoplus V_i \otimes M_{ij},\]

$I \otimes M_i=\bigoplus M_{ij}$ is the decomposition of $I \otimes M_i$ into the direct sum of the irreducible (possibly
 not pairwise nonequivalent) components. Hence the map $\phi: I \otimes M \rightarrow M$\
is decomposed into the direct sum of maps $V_i \otimes M_{ij} \rightarrow V_l \otimes M_l$,
this map can be nonzero only if $M_{ij}\cong M_l$. Therefore the map
$I \otimes M \rightarrow M$ is given by the set of the maps $\alpha({i,j}):V_i \rightarrow V_l$ for every
$M_{ij}\cong M_l$. Note that the representations of the quiver $K_I$ are given by the same way. Thus we have the correspondence
between the objects. The morphism from $M=\bigoplus_{i=1}^n V_i \otimes M_i$ to $N=\bigoplus_{i=1}^n U_i \otimes M_i$
(possibly, some of $U_i$ and $V_i$ are zero) is the set of the maps
$\gamma_i=V_i \rightarrow U_i$, the commutativity of the diagram \eqref{com} is obviously equivalent to
the commutativity of the following diagrams:
\begin{equation}
 \xymatrix{
V_{i}\ar[d]^{ \gamma_i} \ar[r]_{\alpha_{ij}} &V_l \ar[d]_{ \gamma_l}\\
U_i \ar[r]^{\beta_{ij}}&U_l}.
\end{equation}
Morphism of the quiver representations is given in the same way. This completes the proof of the lemma.
\end{proof}
Now consider the full subcategory of $\mathcal{K}_I$, objects of which are pairs $(M, \phi: I \otimes M \rightarrow M)$
satisfying the condition \eqref{ex}. Consider the image of this subcategory under the given equivalence.
Introduce the function $l(i,j)$ such that $I \otimes M_i=\bigoplus M_{l(i,j)}$. Then the maps $\alpha_{i,l(i,j)}$ act from $V_i \otimes M_i$ to $V_{l(i,j)} \otimes M_{l(i,j)}$.
\[
 \phi  \left(I  \otimes M\right)=\sum_{i=1}^n \phi \left(V_i \otimes I \otimes M_i \right)
 =\sum_{i,j} \left(\alpha(i,j)(V_i)\otimes M_{l(i,j)} \right).
\]
Therefore we have:
\[
 \phi  \circ (\id \otimes \phi)
 \left(\left(I \otimes I \right) \otimes M\right)=
 \phi (I \otimes \sum_{i,j} \left(\alpha(i,j)(V_i)\otimes M_{l(i,j)} \right))=\]
\[= \sum_{i,j,k} \left(\alpha(l(i,j),k)\alpha(i,j)(V_i)\otimes M_{l(l(i,j),k)} \right).
\]
\[I \otimes I \otimes M_i=\bigoplus_{j,k}V_i \otimes M_{l(l(i,j),k)}.\]
But $I \otimes I \otimes M_i=((I \odot I) \otimes M_i) \oplus
((I \wedge I) \otimes M_i). $ Thus, grouping the latter sum with respect to
the equivalence classes of modules $M_{l(l(i,j),k)}$:
\[V_i \otimes \bigoplus_{j,k} M_{l(l(i,j),k)}=V_i \otimes\bigoplus_m W_m \otimes M_m,\]
we have that every $W_m$ decomposes into a direct sum $W_m=W_m' \oplus W_m''$ so that
\[W_m' \otimes M_m \subset (I \odot I) \otimes M_i,\]
\[W_m'' \otimes M_m \subset (I \wedge I) \otimes M_i.\]

 The condition ~\eqref{ex} holds iff all the modules of the type
$V_i \otimes W_m'' \otimes M_m $ are mapped to $0$ under $\phi  \circ (\id \otimes \phi)$.
 In turn, this condition is equivalent to the linear dependencies system on $\alpha(l(i,j),k)\alpha(i,j)$
 (with $l(l(i,j),k)=m$), the number of which coincides with the dimension of $W_m''$.
 Therefore we obtain that the image of the considered subcategory is the category of finite dimension representations of the
 quiver $K_I$ with the set of relations of degree $2$. Using Lemma \ref{com}
 we obtain that this category is equivalent to the category of representations of the Lie algebra $L \rightthreetimes I$.

 \begin{remark}
 Remark \ref{com1} implies that if $\widehat{L}=L \rightthreetimes R$, where $L$ is a semisimple Lie algebra and the radical $R$
 is generated by the pre-image of $I=R/[R,R]$ under the factorization, isomorphic to $I$ as the $L$-module,
 then the category of representations of $\widehat{L}$ is equivalent to
 the category of representations of the quiver $K_I$ with some ideal of relations, which is contained in the ideal
 of relations for $L \rightthreetimes I$.
 \end{remark}

 \section{Representations of the quiver $K_I$ and wildness of Lie algebras with abelian radical}

 In the classification problem for the representations of $K_I$ we will find the finite-dimensional controlled wild subproblems.
 In particular if the quiver $K_I$ has the
 subquiver with the wild double (see, for example, \cite{Gab}), in particular, the wild subquiver
 without consecutive arrows, then the algebra $L \rightthreetimes I$ is wild.

Thus we easily derive the following result.
\begin{proposition} \label{sb}
 Let $I$ be an $L$-module such that there exists an
 $L$-module $M$ with the following property: $I \otimes M$ contains either 5 different indecomposable components
 or a component of the multiplicity $\geq 3$ or a component of the multiplicity 2 and another component.
 Then the Lie algebra $L \rightthreetimes I$ is controlled wild.
\end{proposition}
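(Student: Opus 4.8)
The plan is to exhibit, for each of the three listed possibilities, a subquiver of $K_I$ that is of wild representation type and that contains no pair of consecutive arrows, and then to invoke the reduction explained just before Proposition~\ref{sb}: the category of representations of $L\rightthreetimes I$ is equivalent to the category of representations of $K_I$ subject only to relations of degree $2$ coming from \eqref{ex}, so a subquiver carrying no path of length $2$ inherits none of these relations and its entire (wild) representation category embeds as a full subcategory. Accordingly I would take $M$ irreducible, so that $[M]$ is a single vertex of $K_I$ and the multiplicity of a component $N$ of $I\otimes M$ equals the number of arrows $[M]\to[N]$; write $I\otimes M=\bigoplus_j N_j^{\oplus m_j}$ with the $N_j$ pairwise nonisomorphic.

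First I would isolate the star-shaped subquiver $S$ spanned by $[M]$ and the $[N_j]$: it has $m_j$ arrows from the source $[M]$ to each sink $[N_j]$, and since every arrow leaves $[M]$ and enters a sink there is no path of length $2$ in $S$ (provided no $N_j\cong M$). Its Euler/Tits form is
\[
q(x_0,x_1,\dots)=x_0^2+\sum_j x_j^2-\sum_j m_j\,x_0x_j,
\]
and by Gabriel's theorem \cite{Gab} the subquiver $S$ fails to be of finite or tame type exactly when $q$ is indefinite. The core computation is then to check that each hypothesis forces indefiniteness. If some $m_j\ge 3$ (case of multiplicity $\ge 3$), the restriction $x_0^2-m_jx_0x_j+x_j^2$ already has negative discriminant, so $S$ dominates the $3$-Kronecker quiver. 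If some $m_j=2$ and there is a further component $N_k$ (the multiplicity-$2$-plus-another case), evaluating $q$ on those two branches at $(t,t,s)$ gives $s^2-ts$, which is negative for $0<s<t$. If there are at least five distinct components, then either some multiplicity is $\ge 2$, reducing to one of the previous two cases, or all $m_j=1$ and $S$ is the star $K_{1,5}$, whose form is indefinite (since $K_{1,4}=\tilde D_4$ is already the positive semidefinite boundary). In every case $S$ is a wild subquiver without consecutive arrows, so by the remark preceding Proposition~\ref{sb} the algebra $L\rightthreetimes I$ is controlled wild.

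The step I expect to be the main obstacle is the degenerate situation in which some component $N_j$ is isomorphic to $M$ itself: then the corresponding arrows are loops, a loop composed with itself is a path of length $2$, and the degree-$2$ relations arising from $(I\wedge I)\otimes M$ in \eqref{ex} need no longer be vacuous. One must either verify that wildness survives these relations (equivalently, that the loop configuration still dominates a controlled-wild problem) or argue that the sinks can be chosen disjoint from the source. Once this point is dealt with, the three computations above together with \cite{Gab} and the established quiver equivalence complete the proof.
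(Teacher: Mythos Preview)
Your approach coincides with the paper's: in each of the three cases the relevant star-shaped configuration is exactly the picture the paper draws, and your Tits-form computations confirm wildness (minor slip: for $m_j\ge 3$ you want the discriminant \emph{positive}, i.e.\ the binary form indefinite).

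The gap is precisely the one you flag yourself, and the paper closes it with the one word you overlooked in the sentence preceding Proposition~\ref{sb}. That sentence offers two sufficient conditions: a subquiver whose \emph{double} is wild, and, as a special case, a wild subquiver without consecutive arrows. You invoked only the latter; the paper's proof invokes the former, stating that ``the double of $K_I$'' contains the three displayed quivers. The double (separated quiver) replaces each vertex $v$ by a pair $v^+,v^-$ and each arrow $v\to w$ by an arrow $v^+\to w^-$; it is bipartite, so it contains no path of length~$2$ at all, and a loop at $[M]$ unfolds to an honest arrow $[M]^+\to[M]^-$ between distinct vertices. Thus your star $S$ always sits inside the double with source $[M]^+$ and pairwise distinct sinks $[N_j]^-$, regardless of whether some $N_j\cong M$, and the degree-$2$ relations from \eqref{ex} are vacuous there. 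Wildness of the double then forces controlled wildness of $K_I$-with-relations via the standard radical-square-zero embedding (place $V_i^+\oplus V_i^-$ at vertex $i$ and make every arrow strictly lower-triangular, so all length-$2$ composites vanish), which is the content of the reference to \cite{Gab}. In short: pass to the double before drawing $S$, and your argument is complete.
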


\begin{proof}
 In the considered cases double of $K_I$ has one of the following subquivers:

\begin{picture}(70.00,70.00)(-5,00)
 \put(35.00,35.00){\circle*{3.00}}
  \put(35.00,70.00){\circle*{3.00}}
 \put(35.00,0.00){\circle*{3.00}}
 \put(0.00,35.00){\circle*{3.00}}
  \put(70.00,35.00){\circle*{3.00}}
 \put(70.00,70.00){\circle*{3.00}}
 \put(35.00,35.00){\vector(1,0){35}}
  \put(35.00,35.00){\vector(-1,0){35}}
   \put(35.00,35.00){\vector(0,1){35}}
    \put(35.00,35.00){\vector(0,-1){35}}
     \put(35.00,35.00){\vector(1,1){35}}
\end{picture},
\begin{picture}(45.00,20.00)(-5,00)
 \put(0.00,10.00){\circle*{3.00}}
  \put(35.00,10.00){\circle*{3.00}}

 \put(34.00,8.00){\vector(2,1){0}}
  \put(34.00,12.00){\vector(2,-1){0}}
   \put(0.00,10.00){\vector(1,0){35}}
 \put(0.00,10.00){\bezier{200}(0,0)(17,20)(35,00)}
  \put(0.00,10.00){\bezier{200}(0,0)(17,-20)(35,00)}
\end{picture},
\begin{picture}(70.00,20.00)(-5,00)
 \put(0.00,10.00){\circle*{3.00}}
  \put(35.00,10.00){\circle*{3.00}}
\put(70.00,10.00){\circle*{3.00}}
 \put(69.00,8.00){\vector(2,1){0}}
  \put(69.00,12.00){\vector(2,-1){0}}
 \put(35.00,10.00){\vector(-1,0){35}}
 \put(35.00,10.00){\bezier{200}(0,0)(17,20)(35,00)}
  \put(35.00,10.00){\bezier{200}(0,0)(17,-20)(35,00)}
\end{picture}.

\end{proof}

\subsection{Lie algebras with a "big" abelian radical}

Now let us prove a stronger fact.
 Let $\Lambda_{N}$ be the highest weight of the module $N$, $\Lambda$ be the highest weight
 of the module $I$.
 \begin{definition}
The module $M$ will be called large for $I$ if $I \otimes M$ contains
 at least three indecomposable components and at least one of them is contained in the
 decomposition of the tensor product $I \otimes N$, $N  \neq M.$
  In this case we will say that I admits a large module.
\end{definition}

\begin{lemma} \label{qui}
Let $I$ be an indecomposable $L$-module and $I$ admits a large module $M$. Then the algebra $L \rightthreetimes I$ is wild.
\end{lemma}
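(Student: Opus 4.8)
The plan is to read the hypothesis off as combinatorics of the quiver $K_I$ and then exhibit a wild subproblem. By the equivalence proved above, representations of $L\rightthreetimes I$ coincide with representations of $K_I$ satisfying the degree-two relations \eqref{ex}; and, as noted before Proposition \ref{sb}, it is enough to find a subquiver of $K_I$ with no two consecutive arrows---on which every such relation is automatically zero---whose ``double'' (the underlying symmetrized graph) is wild, i.e.\ neither Dynkin nor extended Dynkin. Since $L$ is semisimple the indecomposable module $I$ is in fact irreducible, so the arrows leaving a vertex $M$ record the irreducible constituents of $I\otimes M$, which I will track through highest weights.

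First I would record the local picture. With $I\otimes M\supseteq M_1\oplus M_2\oplus M_3$ and $M_1\subseteq I\otimes N$, $N\neq M$, largeness of $M$ gives arrows $M\to M_1,M_2,M_3$ and $N\to M_1$. The cases in which some constituent has multiplicity $\ge 2$, or in which $I\otimes M$ has five or more constituents, are already settled by Proposition \ref{sb}; so I may assume all multiplicities are one and that $I\otimes M$ has three or four distinct constituents. If there are four, the arrows out of $M$ already form the Euclidean star $\widetilde D_4$, and the extra arrow $N\to M_1$ makes the underlying graph strictly larger than $\widetilde D_4$, hence wild; as long as $M,N$ are disjoint from the constituents the subquiver has no consecutive arrows, and the claim follows at once.

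The delicate case is exactly three constituents. Here the four arrows $M\to M_1,M_2,M_3$ and $N\to M_1$ have underlying graph the Dynkin diagram $D_5$ (a degree-three vertex $M$, leaves $M_2,M_3$, and tail $M-M_1-N$), which is of finite type; so the heart of the matter is to enlarge this configuration to a wild one. Two mechanisms are available. If one of $M,N$ coincides with a constituent---for instance $M\subseteq I\otimes M$, as for the adjoint module---then an arrow becomes a loop, and a loop on a branch vertex already produces a wild double. In the generic situation I would instead manufacture one further arrow from the decomposition combinatorics: since $M_1$ occurs in both $I\otimes M$ and $I\otimes N$, the constraints $\Lambda_{M_1}=\Lambda_M+\mu=\Lambda_N+\nu$ with $\mu,\nu$ weights of $I$ pin down $\Lambda_N$ relative to $\Lambda_M$ so tightly that $I\otimes M$ and $I\otimes N$ must share a second constituent, or some third irreducible must map onto $M_1$. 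Either extra arrow raises the effective degree at $M$ or at $M_1$ to four with an edge to spare, so the subquiver---still free of consecutive arrows---strictly contains $\widetilde D_4$ (or closes up a cycle beyond $\widetilde A_3$) and is therefore wild.

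I expect the main obstacle to be precisely this last enlargement: proving, uniformly over all irreducible $I$ admitting a large module, that the three-constituent, multiplicity-one regime always supplies the one additional arrow needed to cross the tame thresholds $\widetilde A_3,\widetilde D_4,\widetilde D_5$, and handling cleanly the coincidences among $M,N,M_1,M_2,M_3$ that instead create loops or multiple edges. Once such a wild subquiver without consecutive arrows is exhibited, the reduction recalled before Proposition \ref{sb} shows that $L\rightthreetimes I$ is wild.
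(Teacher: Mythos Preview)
Your reduction to combinatorics of $K_I$ and the disposal of the multiplicity and four-constituent cases via Proposition~\ref{sb} are fine, and match the paper. The gap is exactly where you flag it: in the three-constituent, multiplicity-one case you only have a $D_5$-shaped subquiver without consecutive arrows, and your claimed mechanism for producing one further arrow (``$I\otimes M$ and $I\otimes N$ must share a second constituent, or some third irreducible must map onto $M_1$'') is not proved and is not obviously true uniformly in $I$. You are trying to enlarge the star sideways while insisting on no consecutive arrows, and there is no general weight argument that guarantees this.

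The paper sidesteps this entirely by abandoning the ``no consecutive arrows'' constraint and instead growing one arm of the star \emph{lengthwise} via highest weights. After relabelling so that $\Lambda_{M_3}=\Lambda_M+\Lambda$, one sets $\Lambda_{M_i}=\Lambda_M+(i-2)\Lambda$ for $3\le i\le 9$; each $M_{i+1}$ is then automatically a summand of $I\otimes M_i$, giving a chain $M\to M_3\to\cdots\to M_9$ of length~$7$. Together with $M_0\to M_1\leftarrow M\to M_2$ this yields a star with arm lengths $1,2,7$, which is strictly beyond $\widetilde E_8$ and hence wild. The chain does have consecutive arrows, so one must check the relations~\eqref{ex}: but $M_{i+2}$, having highest weight $\Lambda_{M_i}+2\Lambda$, lies in $(I\odot I)\otimes M_i$ rather than $(I\wedge I)\otimes M_i$, so the degree-two relations impose nothing on the compositions along this chain. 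This is the idea you are missing; it replaces the unproven ``extra arrow'' step with a completely explicit construction.
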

\begin{proof}
 The case when $I \otimes M$ has nonisomorphic components is a partial case of Proposition \ref{sb}.
 Assume now that all these modules are pairwise nonequivalent. Denote them by $M_1$, $M_2$, $M_3$.
 Then without loss of generality we may assume that
 $\Lambda_{M_3}=\Lambda_{M}+\Lambda$. For $3 \leq i \leq 9$ denote by
 $M_i$ the module with the highest weight $\Lambda_{M_3}=\Lambda_{M}+(i-2)\Lambda$.
  Denote by $M_0$ a simple module $N$ such that $N \otimes I$ has the module $M_1$ as a direct summand and $N$ is not isomorphic to $M$
  (this module exists for the large module
  $\Lambda_{M}$). As the module of weight $\Lambda_{M_i}+2\Lambda$ is a direct summand of the module
  $(I \odot I) \otimes M_i$ we obtain that the classification problem for the representations of $K_I$ has the subproblem
  of classification of representations for the following quiver:

  \begin{picture}(335.00,50.00)(-5,00)
 \put(0.00,45.00){\circle*{3.00}}
  \put(35.00,45.00){\circle*{3.00}}
\put(70.00,45.00){\circle*{3.00}}
\put(70.00,10.00){\circle*{3.00}}
 \put(105.00,45.00){\circle*{3.00}}
\put(140.00,45.00){\circle*{3.00}}
 \put(175.00,45.00){\circle*{3.00}}
\put(210.00,45.00){\circle*{3.00}}
\put(245.00,45.00){\circle*{3.00}}
\put(280.00,45.00){\circle*{3.00}}
\put(315.00,45.00){\circle*{3.00}}
 \put(00.00,45.00){\vector(1,0){35}}
 \put(70.00,45.00){\vector(-1,0){35}}
 \put(70.00,45.00){\vector(1,0){35}}
 \put(105.00,45.00){\vector(1,0){35}}
 \put(140.00,45.00){\vector(1,0){35}}
 \put(175.00,45.00){\vector(1,0){35}}
 \put(210.00,45.00){\vector(1,0){35}}
 \put(245.00,45.00){\vector(1,0){35}}
 \put(280.00,45.00){\vector(1,0){35}}
  \put(70.00,45.00){\vector(0,-1){35}}
  \put(00,45){\makebox(0,-10){$M_0$}}
  \put(35,45){\makebox(0,-10){$M_1$}}
  \put(70,45){\makebox(0,-10){$M$}}
  \put(105,45){\makebox(0,-10){$M_3$}}
  \put(140,45){\makebox(0,-10){$M_4$}}
  \put(175,45){\makebox(0,-10){$M_5$}}
  \put(210,45){\makebox(0,-10){$M_6$}}
  \put(245,45){\makebox(0,-10){$M_7$}}
  \put(280,45){\makebox(0,-10){$M_8$}}
  \put(315,45){\makebox(0,-10){$M_9$}}
  \put(75,10){\makebox(10,0){$M_2$}}
\end{picture}

  This quiver is wild (\cite{Naz}). If some of $M_i$ coincides with other one
then our quiver is wild, too.
\end{proof}

\begin{corollary}
 Let $\dim I \geq 3$. Then $L \rightthreetimes I$ is wild.
\end{corollary}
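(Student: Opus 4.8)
The plan is to use Weyl's complete reducibility theorem: since $L$ is semisimple, $I$ decomposes as a direct sum of irreducible $L$-modules, and I will split the argument according to whether $I$ is irreducible or not, feeding the reducible case into Proposition~\ref{sb} and the irreducible case into Lemma~\ref{qui}. Throughout, write $V_\mu$ for the irreducible $L$-module of highest weight $\mu$, and $m_\nu$ for the multiplicity of a weight $\nu$ in a given module.

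\emph{Reducible case.} Suppose $I$ is not irreducible and take the test module $M=I$, so that I examine $I\otimes I$. If $I$ is isotypic, say $I\cong nJ$ with $n\ge 2$, then $I\otimes I\cong n^2(J\otimes J)$; here $J\otimes J$ already has at least two constituents (its Cartan component together with a piece of $\wedge^2 J$ when $\dim J\ge 2$, while if $J$ is trivial so that $I$ is a trivial module of dimension $\ge 3$, the whole of $I\otimes I$ is a multiple of the trivial module), so $I\otimes I$ has a constituent of multiplicity $\ge 4\ge 3$ and Proposition~\ref{sb} applies. If instead $I$ has two nonisomorphic summands, choose a summand $J_1$ whose highest weight $\lambda_1$ is maximal; then the Cartan component $V_{2\lambda_1}$ occurs in $I\otimes I$ only inside $J_1^{\otimes 2}$ and with multiplicity one, whereas every constituent of a cross term $J_1\otimes J_2$ occurs with multiplicity $\ge 2$ (coming from both $J_1\otimes J_2$ and $J_2\otimes J_1$) and has highest weight strictly below $2\lambda_1$, hence is different from $V_{2\lambda_1}$. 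Thus $I\otimes I$ carries a constituent of multiplicity $\ge 2$ together with a further constituent, which is the last clause of Proposition~\ref{sb}; so $L\rightthreetimes I$ is controlled wild.

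\emph{Irreducible case.} Now let $I$ be irreducible; since $\dim I\ge 3$ it is nontrivial, so its highest weight $\Lambda$ is nonzero. Here I aim to exhibit a large module and invoke Lemma~\ref{qui} (note that $I$ is indeed indecomposable). The key input is the stable tensor-product decomposition: if $\mu$ is a dominant weight lying so deep in the Weyl chamber that $\mu+\nu$ is dominant and regular for every weight $\nu$ of $I$, then no cancellation occurs in the multiplicity formula and $I\otimes V_\mu\cong\bigoplus_\nu m_\nu\,V_{\mu+\nu}$, the sum running over the distinct weights $\nu$ of $I$. Since $I$ is nontrivial it has at least two distinct weights, and $\dim I=\sum_\nu m_\nu\ge 3$. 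If $I$ has at least three distinct weights, then $I\otimes V_\mu$ has at least three pairwise distinct constituents $V_{\mu+\nu}$; choosing two distinct weights $\nu_1\ne\nu_2$ and putting $\mu'=\mu+\nu_1-\nu_2$ (still deep in the chamber and $\ne\mu$) gives $V_{\mu+\nu_1}=V_{\mu'+\nu_2}$ as a common constituent of $I\otimes V_\mu$ and $I\otimes V_{\mu'}$, so $V_\mu$ is large for $I$ and Lemma~\ref{qui} yields that $L\rightthreetimes I$ is wild. If $I$ has exactly two distinct weights, then one of them has multiplicity $\ge 2$, and $I\otimes V_\mu$ has a constituent of multiplicity $\ge 2$ together with another, so Proposition~\ref{sb} applies directly.

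The main obstacle is the representation-theoretic input of the irreducible case, namely the justification of the stable decomposition $I\otimes V_\mu\cong\bigoplus_\nu m_\nu V_{\mu+\nu}$ for $\mu$ far from the walls, together with the verification that the shifted weights $\mu+\nu$ are pairwise distinct and dominant; this is precisely the regime in which the multiplicity formula has no cancellation, and the care needed lies in pushing $\mu$ far enough from every wall simultaneously for all weights of $I$. The reducible case, by contrast, reduces to bookkeeping with highest weights once $M=I$ is fixed.
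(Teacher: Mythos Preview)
Your argument is essentially correct and is in fact more carefully laid out than the paper's own proof. The paper argues uniformly: for any $L$-module $I$ and any simple $M=V_\mu$ with $\mu$ sufficiently far inside the dominant chamber, the Kostant multiplicity formula gives $I\otimes V_\mu\cong\bigoplus_\nu m_\nu V_{\mu+\nu}$ with $\dim I$ summands counted with multiplicity, and then simply invokes Lemma~\ref{qui}. Your irreducible case reproduces this and, unlike the paper, explicitly checks the ``shared constituent'' clause in the definition of a large module via the shift $\mu'=\mu+\nu_1-\nu_2$; you also separate off the (in fact vacuous for irreducible $I$ of dimension $\ge 3$) two-weight subcase and feed it to Proposition~\ref{sb}. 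The genuine difference is your reducible case, where you take $M=I$ and analyse $I\otimes I$ directly rather than passing to the stable range; this sidesteps the awkwardness that Lemma~\ref{qui} is stated only for indecomposable $I$, and is a perfectly valid alternative route.

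One small wrinkle in that reducible argument: when $I$ is non-isotypic you claim $V_{2\lambda_1}$ occurs ``with multiplicity one'' and that cross-term constituents lie ``strictly below $2\lambda_1$''. The first fails if the summand $J_1$ itself appears with multiplicity $k>1$ in $I$ (the multiplicity of $V_{2\lambda_1}$ is then at least $k^2$), and the second tacitly assumes every highest weight of a summand is comparable to $\lambda_1$ in the dominance order, which need not hold. Neither point affects the conclusion: the Cartan component $V_{\lambda_1+\lambda_2}$ of $J_1\otimes J_2$ appears in $I\otimes I$ with multiplicity at least $2$ (once from $J_1\otimes J_2$, once from $J_2\otimes J_1$), while $V_{2\lambda_1}$ also appears and is distinct from it since $\lambda_1\ne\lambda_2$; that alone already triggers the last clause of Proposition~\ref{sb}. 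With this minor rephrasing your proof is complete.
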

\begin{proof}
For any module $I$ there exist such a weight $\tilde{\lambda}$ that for any simple module $M$
with the highest weight greater then $\tilde{\lambda}$ the tensor product $I \otimes M$ has $\dim I$ irreducible direct summands.
(It can be easily obtained from the Costant formula and the formula of the dimensions, see for example \cite{dzh}).
Thus, if $\dim I \neq 3$ we can use Lemma \ref{qui}.
\end{proof}

 \subsection{The case of the two-dimensional module}
 Consider Lie algebras with two-dimensional abelian radicals. If it is decomposable then
 this quiver has the decomposition into a direct sum of two one-dimensional submodules.
 In this case, the quiver $K_I$ is a disjoint union of vertices with two loops.
 The module $I \wedge I$ is one-dimensional, therefore the classification problem of representations of some subquiver of this quiver
 is exactly the classification problem of pair of the matrices with one homogeneous condition of degree 2.
 The results of the paper  \cite{Sam} imply that this problem is always wild.
 Consider the case of the two-dimensional indecomposable module.
 Only Lie algebras of type
 $L= sl_2 \oplus \widehat{L}$ have the two-dimensional indecomposable module.
 $\widehat{L}$ is semisimple and acts on $I$ trivially.
 In this case $L\rightthreetimes I \backsimeq sl_2 \rightthreetimes I  \oplus \widehat{L}$.
 By Proposition \ref{tame}, the direct sum of a semisimple and a tame algebra is tame.
 Thus we may consider only the algebra $sl_2 \rightthreetimes I$.

  Consider the case of the two-dimensional module over $sl_2$. Then the quiver $K_I$ is of the following form:

 \begin{picture}(250.00,55.00)(-5,00)
 \put(00.00,20.00){\circle*{3.00}}
 \put(80.00,20.00){\circle*{3.00}}
 \put(160.00,20.00){\circle*{3.00}}
 \put(240.00,20.00){\circle*{3.00}}
 \put(0.00,20.00){\bezier{200}(0,0)(40,20)(80,00)}
 \put(0.00,20.00){\bezier{200}(0,0)(40,-20)(80,00)}
 \put(79.00,19.00){\vector(4,1){0}}
 \put(1.00,21.00){\vector(-4,-1){0}}
  \put(80.00,20.00){\bezier{200}(0,0)(40,20)(80,00)}
   \put(80.00,20.00){\bezier{200}(0,0)(40,-20)(80,00)}
 \put(159.00,19.00){\vector(4,1){0}}
 \put(81.00,21.00){\vector(-4,-1){0}}
  \put(160.00,20.00){\bezier{200}(0,0)(40,20)(80,00)}
   \put(160.00,20.00){\bezier{200}(0,0)(40,-20)(80,00)}
 \put(239.00,19.00){\vector(4,1){0}}
 \put(161.00,21.00){\vector(-4,-1){0}}

 \put(0,20){\makebox(0,-17){$M_{1}$}}
  \put(80,20){\makebox(0,-17){$M_{2}$}}
   \put(160,20){\makebox(0,-17){$M_{3}$}}
    \put(240,20){\makebox(0,-17){$M_{4}$}}
 \put(40,20){\makebox(0,30){$\alpha_{1}$}}
  \put(120,20){\makebox(0,30){$\alpha_{2}$}}
   \put(200,20){\makebox(0,30){$\alpha_{3}$}}
 \put(40,20){\makebox(0,-30){$\beta_{1}$}}
  \put(120,20){\makebox(0,-30){$\beta_{2}$}}
   \put(200,20){\makebox(0,-30){$\beta_{3}$}}
    \put(250,20){\makebox(0,0){$\ldots$}}
\end{picture}

 The module $I\wedge I$ is one-dimensional. Therefore we will have only the following conditions:
 $k_i\alpha_i \beta_i+l_i \beta_{i+1} \alpha_{i+1}=0$ with some constants $k_i$ and $l_i$.
 This problem is tame for all such constants. Thus the Lie algebra $sl_2\rightthreetimes I$
 is tame.
 The direct sums of tame and semisimple Lie algebras are tame, therefore all the algebras
 $L=(sl_2 \rightthreetimes I)\oplus \widehat{L}$  are tame.

 Now we can prove the following theorem.
 \begin{theorem}\label{abrad}
 Let $L=\widehat{L} \rightthreetimes I$ be an arbitrary Lie algebra with an abelian radical. Then $L$ is tame iff
  the module $I$ is one-dimensional or two-dimensional and indecomposable.
 \end{theorem}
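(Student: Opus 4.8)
The plan is to organize the proof around the structure of the radical $I$ as an $\widehat{L}$-module and to invoke the results accumulated in the earlier subsections. First I would reduce to the case where $\widehat{L}$ is semisimple: by Proposition~\ref{solv} any solvable algebra of dimension $>1$ is controlled wild, so if the Levi factor is trivial and $I$ is large the algebra is wild, while the one-dimensional case is handled by Remark~\ref{rtame}. Thus the interesting situation is $L=\widehat{L}\rightthreetimes I$ with $\widehat{L}$ semisimple acting on the abelian ideal $I$, and the entire analysis passes through the quiver $K_I$ and its relations of degree $2$, as established in Section~3 and the Lemma computing the image subcategory.

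Next I would split according to $\dim I$. If $\dim I\ge 3$, the Corollary to Lemma~\ref{qui} shows directly that $L\rightthreetimes I$ is wild, so these algebras are excluded from tameness. If $\dim I=2$, I would separate the decomposable from the indecomposable case exactly as in the subsection on two-dimensional modules: when $I$ splits as a sum of two one-dimensional submodules, the quiver $K_I$ is a disjoint union of vertices each carrying two loops subject to a single homogeneous degree-$2$ relation (since $I\wedge I$ is one-dimensional), and by the results of \cite{Sam} this subproblem is wild; when $I$ is indecomposable, the only possibility is $L\cong sl_2\oplus\widehat{L}'$ with $\widehat{L}'$ acting trivially, so $L\rightthreetimes I\cong(sl_2\rightthreetimes I)\oplus\widehat{L}'$, and the explicit tameness of $sl_2\rightthreetimes I$ together with Proposition~\ref{sumtame} (direct sums of tame and semisimple algebras are tame) yields tameness.

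Finally I would treat $\dim I=1$ directly: a one-dimensional module over a semisimple Lie algebra must be trivial, so $L=\widehat{L}\oplus\langle e\rangle$ is a one-dimensional central extension, and Remark~\ref{rtame} shows this is tame since its indecomposable representations are classified by a simple $\widehat{L}$-module paired with a single Jordan cell. Combining the three cases, $L$ is tame precisely when $\dim I=1$ or when $\dim I=2$ with $I$ indecomposable, and wild otherwise, which is the claim.

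I expect the main obstacle to be the genuinely positive (tameness) assertion for $sl_2\rightthreetimes I$ rather than any of the wildness arguments, since wildness follows cleanly by exhibiting a wild subquiver via Proposition~\ref{sb} and Lemma~\ref{qui}. The delicate point is to confirm that the tame matrix problem attached to the quiver of $sl_2$ with the two-dimensional module—an $A_\infty$-type quiver of doubled arrows $\alpha_i,\beta_i$ constrained by the relations $k_i\alpha_i\beta_i+l_i\beta_{i+1}\alpha_{i+1}=0$—really remains tame for every admissible choice of the constants $k_i,l_i$, so that the single one-dimensional relation coming from $I\wedge I$ does not introduce wild behaviour. Everything else is bookkeeping: matching dimensions of $I$ to the quiver shapes and citing the classification inputs already invoked in the preceding subsections.
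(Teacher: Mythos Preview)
Your proposal is correct and follows essentially the same case analysis as the paper: split on $\dim I$, invoke the Corollary to Lemma~\ref{qui} for $\dim I\ge 3$, handle $\dim I=2$ via the preceding subsection (decomposable wild by \cite{Sam}, indecomposable tame via the explicit $sl_2$ quiver and Proposition~\ref{sumtame}), and use Remark~\ref{rtame} for $\dim I=1$. Your observation that the only substantive point is the tameness of $sl_2\rightthreetimes I$ matches the paper exactly, which also simply asserts that the relations $k_i\alpha_i\beta_i+l_i\beta_{i+1}\alpha_{i+1}=0$ yield a tame problem for all constants.
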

 \begin{proof}
 If $\dim I >2$, then $L$ is wild by Lemma \ref{qui}.
 If the module $I$ is decomposable into a direct sum of two one-dimensional submodules, then $L$ is wild.
 Conversely if $I$ is the two-dimensional simple module then we have proved that $L$ is tame.
 If $I$ is one-dimensional then $L$ is tame by Remark \ref{rtame}.

 \end{proof}

 \section{The case of a nonabelian radical}

Now consider the algebras with nonabelian radicals. The algebra $L/[R,R]$ has an abelian radical.
 If this algebra is wild, then the Lie algebra $L$ is wild too.  Therefore we can consider only algebras with the tame
 quotient algebra by the square of the radical. Theorem \ref{abrad} gives a description of tame algebras with abelian
radicals.

\subsection{Lie algebras with two-dimensional indecomposable quotient algebra of the radical by its square}

 Consider now other algebras whose factor by the square of radical is isomorphic to $sl_2\rightthreetimes I\oplus L'$, $\dim I=2$
 or the algebras of the form $(sl_2\oplus L')\rightthreetimes R$, $R/[R,R]$ is two-dimensional. Let $a_+, a_-$  be some pre-images of the
 elements of $I$ with positive and negative weight such that their linear span is
 a module over $sl_2$. Let $N$ be the subalgebra generated by $a_+, a_-$.
Consider two following cases. $(i)$: $[N,N]=\lbrace 0 \rbrace$, $(ii)$: $[N,N]\neq\lbrace 0 \rbrace$.
 Consider the quotient algebra $L/[[R,R],[R,R]]$
and we will work with the algebras with abelian square of radical.

$(i)$ In this case $L=(sl_2 \rightthreetimes I \oplus L') \rightthreetimes M$.
Let $M$ be an irreducible module over $sl_2\rightthreetimes I$. Note that irreducible modules over $(sl_2 \rightthreetimes I \oplus L')$
are tensor products of irreducible modules over $sl_2 \rightthreetimes I$ and $L'$(see Proposition \ref{sumtame}).
If the direct summand $sl_2 \rightthreetimes I $ acts on $M$ trivially, then we have an algebra
with an abelian radical in the contradiction to our assumptions.
The quiver $K_I$ for the two-dimensional indecomposable module over $sl_2\oplus L'$ is decomposed into the disjoint union
of such quivers for the two-dimensional indecomposable module over $sl_2$. For
any finite subquiver of the considered quiver with considered relations it is easy to see that any irreducible representation
has a nonzero space only in one point. Therefore this algebra is isomorphic to
$sl_2 \rightthreetimes (I \oplus M')\oplus L'$ and is wild. Let
$M'$ be an arbitrary module and $M''$ be a maximal submodule of $M'$. Then we have that
$L=(sl_2 \rightthreetimes I) \rightthreetimes M'/M''$ is wild.
Therefore all considered algebras are wild.

$(ii)$
Let a Lie algebra $L$ be an extension of the Lie algebra $(L'\oplus sl_2) \rightthreetimes I$ with abelian kernel. If $R \neq N$ then
consider the Lie algebra $R / [N,N]$ ($[N,N]$ is one-dimensional ideal because $[[R,R],[R,R]]=0$). This is a Lie algebra from $(i)$.
Hence we can assume that the radical is generated by the irreducible module over $sl_2$. Then
$R=J \oplus [J,J]$ as a vector space. Thus we must consider the representations of the quiver
$K_I$ with some relations of degree $3$ lying in the ideal of relations for
the Lie algebra $L/[R,R]$.

 For the two-dimensional $sl_2$-module $I$ consider the following subquiver of $K_I$:

 \begin{picture}(250.00,55.00)(-5,00)
 \put(00.00,20.00){\circle*{3.00}}
 \put(80.00,20.00){\circle*{3.00}}
 \put(160.00,20.00){\circle*{3.00}}
 \put(240.00,20.00){\circle*{3.00}}
 \put(0.00,20.00){\bezier{200}(0,0)(40,20)(80,00)}
 \put(0.00,20.00){\bezier{200}(0,0)(40,-20)(80,00)}
 \put(79.00,19.00){\vector(4,1){0}}
 \put(1.00,21.00){\vector(-4,-1){0}}
  \put(80.00,20.00){\bezier{200}(0,0)(40,20)(80,00)}
   \put(80.00,20.00){\bezier{200}(0,0)(40,-20)(80,00)}
 \put(159.00,19.00){\vector(4,1){0}}
 \put(81.00,21.00){\vector(-4,-1){0}}
  \put(160.00,20.00){\bezier{200}(0,0)(40,20)(80,00)}
   \put(160.00,20.00){\bezier{200}(0,0)(40,-20)(80,00)}
 \put(239.00,19.00){\vector(4,1){0}}
 \put(161.00,21.00){\vector(-4,-1){0}}

 \put(0,20){\makebox(0,-17){$M_{1}$}}
  \put(80,20){\makebox(0,-17){$M_{2}$}}
   \put(160,20){\makebox(0,-17){$M_{3}$}}
    \put(240,20){\makebox(0,-17){$M_{4}$}}
 \put(40,20){\makebox(0,30){$\alpha_{1}$}}
  \put(120,20){\makebox(0,30){$\alpha_{2}$}}
   \put(200,20){\makebox(0,30){$\alpha_{3}$}}
 \put(40,20){\makebox(0,-30){$\beta_{1}$}}
  \put(120,20){\makebox(0,-30){$\beta_{2}$}}
   \put(200,20){\makebox(0,-30){$\beta_{3}$}}
\end{picture}

Put $\alpha_2=0$, let the maps $\alpha_1, \alpha_3$ be identity.
Relations for the algebra $L/[R,R]$ are of the form $k_i\alpha_i \beta_i+l_i \beta_{i+1}\alpha_{i+1}$, $k_i, l_i \in \mathbb{K}$
(because $(I \wedge I) \otimes M \simeq M$ for any indecomposable module $M$).
Therefore the condition $\beta_{i-1}\beta_{i}\beta_{i+1}=0$ does not lie in the ideal generated by these relations.
Thus we have only one relation of the form $k\beta_{i}\beta_{i+1} +l\beta_{i-1}\beta_{i}=0$,
 $k, l \in \mathbb{K}$. Hence this problem is wild.

  \subsection{Lie algebras with one-dimensional quotient algebra of radical by its square}

 Now consider the case when the quotient algebra of the radical $R$ by its square $[R,R]$ is one-dimensional.
 We can assume that a Lie algebra $L$ is unsolvable.
 We will prove the wildness of the quotients of this algebras, namely the algebras $L/[[R,R],[R,R]]$.
  Therefore we will assume that $[R,R]$ is abelian.
 Consider an extension of the algebra
 $L_0 \oplus I$ by the module $J$, where $L_0$ is a semisimple Lie algebra and $I$ is the one-dimensional algebra.
 This extension splits because the algebra $L_0 \oplus I$
 has trivial second cohomologies in any module (see for example \cite{Zus}).
 We can assume that $J$ is irreducible. Then Proposition \ref{tame} implies that this module is
 $L_0$-module and some element of $I$ acts on $J$ identically or trivially.
 In the latter case we obtain an algebra with the abelian radical.
  Therefore it remains only to consider the following case:
 $J$ is an irreducible $L_0$-module and there is an element of $I$ which acts on this module identically.
 Now let us prove the following proposition.
 \begin{proposition}\label{onedim}
 Let $L=(L_0 \oplus I) \rightthreetimes J$ be a Lie algebra such that $L_0$ is semisimple,
 $I$ is one-dimensional, $J$ is irreducible $L_0$-module, on which some element of $I$ acts identically.
 Then $L$ is wild.
 \end{proposition}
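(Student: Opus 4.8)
The plan is to rewrite a representation $(M,f)$ of $L$ as quiver data, using the first Lemma of Section~3 for the abelian piece and then adjoining the extra generator. Forgetting $e$, the pair $(M,f)$ is a representation of the subalgebra $L_0\rightthreetimes J$ with abelian radical, hence, by the $K_I$–equivalence applied to $J$, it is a representation of $K_J$ satisfying \eqref{ex}; here $\phi\colon J\otimes M\to M$, $\phi(j\otimes m)=f(j)m$, is the $L_0$–homomorphism giving the arrows. The action of $e$ is an extra operator $E=f(e)$ which, since $[L_0,e]=0$, commutes with the $L_0$–action, so by Schur's Lemma it is a matrix $E_i$ on each multiplicity space, i.e. a \emph{loop} at every vertex of $K_J$. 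The relations $[J,J]=0$ again give \eqref{ex}, while $[e,j]=j$ gives the new \emph{shift relation} $E\circ\phi=\phi\circ(\id\otimes E)+\phi$, that is $E_l\alpha=\alpha(E_i+\id)$ along each arrow $\alpha\colon i\to l$. Thus $\mathrm{Rep}(L)$ is equivalent to the category of representations of $K_J$ with a loop adjoined at every vertex, subject to \eqref{ex} and to the shift relations.

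Next I would dispose of the case $\dim J\ge 3$. By the Kostant–formula argument used for the Corollary to Lemma~\ref{qui}, such a $J$ admits a large module, so the proof of Lemma~\ref{qui} embeds a wild quiver $Q$ into the representations of $K_J$ satisfying \eqref{ex}. The underlying graph of that quiver is a tree ($11$ vertices and $10$ edges), hence $Q$ is gradable with every arrow of degree $+1$. Assigning to $E_i$ the scalar equal to the degree of the vertex $i$ satisfies the shift relation, and since $E$ is then scalar on each $L_0$–isotype it is automatically respected by every morphism of $K_J$–representations. Consequently the functor adjoining this $E$ is a full and faithful embedding of $\mathrm{Rep}(Q)$ into $\mathrm{Rep}(L)$ which preserves indecomposability, so $L$ is wild.

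The remaining cases are $\dim J\in\{1,2\}$. If $\dim J=1$ then $J$ is trivial, $R=\langle e\rangle\rightthreetimes J$ is the two-dimensional non-abelian Lie algebra (controlled wild, see \cite{Sam}), and $L=L_0\oplus R$ is wild by Remark~\ref{summa}. If $\dim J=2$ then $J$ is the two-dimensional indecomposable, so $L_0=sl_2\oplus L'$ with $L'$ acting trivially on $J$; by Remark~\ref{summa} it suffices to prove that $(sl_2\oplus\langle e\rangle)\rightthreetimes J$ is wild. Here $K_J$ is the chain $\dots,V(m),V(m+1),\dots$ with arrows $\alpha_m,\beta_m$ in both directions, a loop $E_m$ at each vertex, the shift relations, and the degree-two relations $k_m\beta_m\alpha_m+l_m\alpha_{m-1}\beta_{m-1}=0$ coming from the one-dimensional module $J\wedge J$. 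I would restrict to a finite subchain and, as in the previous subsection, normalise some of the arrows; the loops $E_m$, rigidly linked by the shift relations, then act as a second independent operator alongside the chain maps, and I would reduce the classification to a pair-of-matrices problem, i.e. to a wild quiver of \cite{Naz}.

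The main obstacle is precisely this last step. Since the loop-free chain is the \emph{tame} algebra $sl_2\rightthreetimes I$ of Theorem~\ref{abrad}, the wildness when $\dim J=2$ is created solely by the loops, so the argument cannot be a bare subquiver and must track the interaction of the loops with the shift relations and with \eqref{ex}. The delicate points are to choose the finite subchain long enough that \eqref{ex} does not force the relevant composites $\beta_m\alpha_m$ to vanish (a short chain makes every round trip square to zero and collapses the construction), and to verify that adjoining the loops produces a genuine representation embedding of a wild matrix problem rather than a proper quotient of it.
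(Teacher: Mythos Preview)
Your proof is incomplete: you yourself flag the case $\dim J=2$ as an unresolved obstacle, and indeed your sketch for that case does not go through. The loop-free chain is tame, so you would need a concrete full embedding of a wild problem that genuinely uses the loops $E_m$; you never produce one, and the vague plan to ``normalise some of the arrows'' and ``reduce to a pair-of-matrices problem'' is not an argument. As it stands, the Proposition is unproved.

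More importantly, the case split is unnecessary. You have already observed that $\mathrm{Rep}(L)$ is representations of $K_J$ with a loop $E_i$ at every vertex and the shift relation $E_l\alpha=\alpha(E_i+\id)$ along each arrow $\alpha:i\to l$. Now throw away almost all of $K_J$ and keep only the two vertices corresponding to the trivial $L_0$-module $M_1$ and to $M_2\cong J$. Since $J\otimes M_1\cong J=M_2$, there is exactly one arrow $\beta:M_1\to M_2$ in $K_J$ between them; set every other arrow of $K_J$ to zero. The relation \eqref{ex} is then vacuous on this subcategory, and the only remaining relation is the shift relation at $\beta$, namely $(\alpha_2-\id)\beta=\beta\alpha_1$ for the two loops $\alpha_1,\alpha_2$. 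You have thus fully and faithfully embedded into $\mathrm{Rep}(L)$ the representations of the two-point quiver with a loop at each vertex, one arrow $\beta$, and the single relation $(\alpha_2-\id)\beta=\beta\alpha_1$ (equivalently, after the harmless substitution $\alpha_2\mapsto\alpha_2+\id$, the relation $\alpha_2\beta=\beta\alpha_1$). This two-point algebra is wild by \cite{Han}, uniformly in $\dim J$; no case analysis on $\dim J$ is needed, and in particular the troublesome case $\dim J=2$ disappears.
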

 \begin{proof}
 Consider $L$-modules $M$ of type $M=V_1 \otimes M_1 \oplus V_2 \otimes M_2$, where $M_1$ is trivial
 $L_0$-module, $M_2$ is isomorphic to $J$. Then $M$ has a structure of $L_0 \oplus I$-module and
 $L_0 \rightthreetimes J$-module. The structure of the $L_0 \oplus I$-module by Proposition
 \ref{tame} is given by two maps $\alpha_i:V_i \rightarrow V_i, i=1,2$
 ($im=\alpha(m)$, where $i$ is an element of $I$, $m$ is an element of $V_i$).
 Define a structure of $L_0 \rightthreetimes J$-module by a map $\beta:V_1 \rightarrow V_2$.
 Let $\mu$ be the multiplication map $J\otimes (V_1\otimes M_1)\rightarrow (V_2\otimes M_2)$.
 Then, using the equality $[i,j]=j$ for any $j \in J$, we obtain:
 \[\alpha_2 \mu(j,m_1)-\mu(j,\alpha_1 (m_1))= \mu(j,m_1),\]
 for any $m_1 \in M_1$. So, we have:
 \[(\alpha_2-id) \mu(j,m_1)=\mu(j,\alpha_1 (m_1)).\]
  \[(\alpha_2-id)\circ \mu(J,M_1)=\mu\circ (id,\alpha_1)(J,M_1).\]
  This statement is equivalent to the relation $(\alpha_2-id)\beta=\beta\alpha_1$.
  We obtain the following quiver with relations:

  \begin{picture}(170.00,45.00)(-5,00)
 \put(00.00,10.00){\circle*{3.00}}
 \put(80.00,10.00){\circle*{3.00}}

 \put(0.00,10.00){\vector(1,0){80}}
 \put(88.00,16.00){\vector(0,-1){0}}
 \put(8.00,16.00){\vector(0,-1){0}}

 \put(80.00,18.00){\circle{16.00}}
 \put(00.00,18.00){\circle{16.00}}
 \put(0,10){\makebox(0,-10){$M_{n-2}$}}
  \put(80,10){\makebox(0,-10){$M_{n}$}}

 \put(40,10){\makebox(0,20){$\beta$}}

  \put(80,10){\makebox(0,40){$\alpha_1$}}
 \put(0,10){\makebox(0,40){$\alpha_2$}}
\end{picture}
$\alpha_2\beta=\beta\alpha_1$

  This problem is wild (See, for example, \cite{Han}).
 \end{proof}

 \begin{remark}
 The results of this paragraph and Theorem \ref{abrad} imply that any Lie algebra with nonabelian radical is wild.
 \end{remark}
 \subsection{The main theorem}
 Using the previous results we can prove the next theorem.
\begin{theorem}\label{main}
There exist only five classes of tame finite-dimensional Lie algebras over an algebraically closed field of zero characteristic:

1) a semisimple Lie algebras;

2) the one-dimensional Lie algebra;

3) a direct sums of semisimple semisimple Lie algebras and the one-dimensional algebra;

4) $sl_2 \rightthreetimes I$, where $I$ is the two-dimensional irreducible module;

5) direct sums of semisimple Lie algebras and $sl_2 \rightthreetimes I$.

Any other Lie algebra is controlled wild.
\end{theorem}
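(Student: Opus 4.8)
The plan is to run the whole argument along the Levi--Malcev decomposition $L = S \rightthreetimes R$, with $S$ a semisimple Levi subalgebra and $R$ the radical, and to stratify according to where $[R,R]$ sits inside $R$. The theorem is really an assembly of the facts proved above, so I would first fix the skeleton of the case analysis and only then check that the tame strata coincide with the five listed classes.

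First I would treat the abelian-radical cases. If $R=0$ then $L=S$ is semisimple and tame, giving class (1). If $R\neq 0$ is abelian, then $L=S\rightthreetimes R$ is exactly the setting of Theorem \ref{abrad}, so $L$ is tame precisely when $R$ is one-dimensional or a two-dimensional indecomposable $S$-module, and is controlled wild otherwise by Lemma \ref{qui} and Proposition \ref{sb}. It then remains to match the two tame options with classes (2)--(5). When $\dim R=1$, the $S$-action is a one-dimensional representation of a semisimple algebra, hence trivial, so $[S,R]=0$ and $L=S\oplus R$ splits; this is the one-dimensional algebra (class (2)) if $S=0$ and a direct sum of a semisimple algebra with the one-dimensional algebra (class (3)) otherwise. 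When $R$ is two-dimensional indecomposable, the only semisimple algebras carrying such a module force $S=sl_2\oplus\widehat{L}$ with $\widehat{L}$ acting trivially, so $L\cong(sl_2\rightthreetimes I)\oplus\widehat{L}$; this is tame by Theorem \ref{abrad} and Remark \ref{rtame}, and is class (4) if $\widehat{L}=0$ and class (5) otherwise.

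The remaining case is $R$ non-abelian, where I would show $L$ is always controlled wild. The governing principle, already used above, is that wildness lifts along the surjection $L\twoheadrightarrow L/[R,R]$: restricting representations embeds any wild subproblem of $L/[R,R]$ into the representation problem of $L$. Since $R/[R,R]$ is abelian, Theorem \ref{abrad} shows that the only configurations not already wild at this stage are $\dim R/[R,R]\in\{1,2\}$ with the two-dimensional case indecomposable. These are precisely the two situations handled in the preceding subsections, where a further reduction to $L/[[R,R],[R,R]]$ makes the square of the radical abelian and realizes the problem as the quiver $K_I$ with relations of degrees two and three. The decisive observation is that the cubic relations imposed by the Jacobi identity do not lie in the ideal generated by the quadratic relations of the tame quotient $L/[R,R]$; what survives is a single relation on a string- or loop-type subquiver, which is wild by Proposition \ref{onedim} and the companion computation. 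This is the step I expect to be the real obstacle, since tameness of $L/[R,R]$ does not by itself control $L$: one must verify by hand that the extra relations cannot annihilate the wild subquivers.

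Finally I would collect the three cases and invoke Remark \ref{summa} to propagate wildness through direct summands. This exhausts all finite-dimensional $L$: classes (1)--(5) are exactly the tame algebras, and every remaining algebra is controlled wild.
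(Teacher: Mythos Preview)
Your strategy mirrors the paper's: Levi decomposition, then case analysis on the radical, invoking Theorem~\ref{abrad} for the abelian-radical stratum and the two subsections on nonabelian radicals for the rest. The only structural difference is that the paper disposes of solvable algebras at the very start via Proposition~\ref{solv}, whereas you stratify first by whether $R$ is abelian. This leaves a small gap: when $S=0$ and $R$ is non-abelian with $\dim R/[R,R]=1$ (for instance $\langle x,y\rangle$ with $[x,y]=y$, or more generally $\langle x,y_1,\dots,y_k\rangle$ with $[x,y_i]=y_i$), the quotient $L/[R,R]$ is one-dimensional and hence tame, so you fall through to the one-dimensional-quotient subsection --- but that subsection explicitly assumes $L$ unsolvable, and the proof of Proposition~\ref{onedim} relies on the two irreducible $L_0$-modules $M_1$ (trivial) and $M_2\cong J$ being nonisomorphic, which fails when $L_0=0$. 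The fix is immediate (invoke Proposition~\ref{solv} before entering the nonabelian-radical analysis, exactly as the paper does), but as written your case split does not close.
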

\begin{proof}
Any semisimple Lie algebra is tame by classical representation theory of the Lie algebras.
Any solvable Lie algebra is wild by Proposition \ref{solv}.

Let now $L$ be a Lie algebra without solvable direct summands.
Consider Levi decomposition of the given Lie algebra $\widehat{L}$: $\widehat{L}=L \rightthreetimes R$, where
 $L$ is the semisimple Lie algebra and $R$ is the radical. If $R/[R,R]$ is one-dimensional then by Propositions
 \ref{onedim} and \ref{tame} $L$ is wild iff $R$ is not one-dimensional.

 Assume now that $\dim I=\dim R/[R,R] > 1$. Then, if either the direct summand of
 $L$ nonisomorphic to $sl_2$ acts nontrivially on $I$ or more then one direct summand acts nontrivially on $I$,
 or $\dim I >2$, then $L/[R,R]$ is wild. Therefore $L$ is wild.
 If only $sl_2$ acts nontrivially on $I$, then either $R=I$ and $L$ is tame by Proposition \ref{sumtame},
  or $R\neq I$ and $L$ is wild by the results of the Section 4.2.

\end{proof}

 The author is grateful to Professor Yu. S. Samoilenko for suggesting the problem
 and for constant attention to this work.


\begin{thebibliography}{99}
\bibitem{Rin}
{C. M. Ringel, Tame algebras and integral quadratic forms, Lecture Notes in Mathematics, vol. 1099, Springer, Berlin, 1984.
}

\bibitem{Goto}
M. Goto, F. Grosshans. Semisimple Lie algebras. New York and Basel, 1978.

\bibitem{Han}
{Yang Han, Wild Two-Point Algebras, Journal of Algebra 247, p. 57-77 (2002)
}

\bibitem{Sam}
{Samoilenko Yu. S., Ostrovskiy V. L.  On the pair of operators subject to a quadratic relation.
Functional analysis and its applications (in Russian).(Accepted for publication.)
}

\bibitem{dzh}
{N. Jacobson. Lie algebras.
New York -- London.
}

\bibitem{Gel}
{Gelfand I. M, Ponomaryov V. A. Remarks on classifications of the pair of commuting linear transformations
in finite-dimensional space.
Functional analysis and its applications 3, vol 4 (1969), 81-82  (in Russian).
}

\bibitem{Drozd}
{Drozd Yu. A. Representations of commutative algebras.
Functional analysis and its applications 6, vol 4 (1973), 41-43 (in Russian).
}

\bibitem{Naz}
{Nazarova L. A. Representations of quivers of infinite type.
Reports of science academy USSR. 37 (1973), 752-791 (in Russian).
}

\bibitem{Gab}
{Gabriel P, Roiter A. V. Representations of finite dimensional algebras.
- Encyclopedia of Math. Sci. (Algebra 8). - 73. - Springer-Verlag, Berlin, 1992, 177 p.
}

\bibitem{Zus}
{Zusmanovich P. A converce to the second Whitehead Lemma.
J. Lie Theory, 2008, vol. 18, pp. 295-299.
}

\end{thebibliography}
\end{document}